\documentclass[reqno,a4paper,12pt]{amsart} 

\usepackage{amsmath,amscd,amsfonts,amssymb}
\usepackage{mathrsfs,dsfont}

\usepackage{bbm}

\allowdisplaybreaks

\numberwithin{equation}{section}
\numberwithin{figure}{section}

\newcommand\C{\mathbb{C}}
\newcommand{\R}{\mathbb{R}}
\newcommand{\T}{\mathbb{T}}
\newcommand{\D}{\mathbb{D}}
\newcommand{\Sp}{\mathbb{S}}
\newcommand{\B}{\mathbb{B}}

\newcommand{\har}{\mathbbm{1}}

\newcommand{\Rl}{\mathrm{Re}\,}

\renewcommand\le{\leqslant}
\renewcommand\ge{\geqslant}
\renewcommand\leq{\leqslant}
\renewcommand\geq{\geqslant}

\theoremstyle{plain}
\newtheorem{thm}{Theorem}[section]

\newtheorem{lemma}[thm]{Lemma}

\newtheorem{prop}[thm]{Proposition}

\newtheorem*{claim*}{Claim}

\theoremstyle{definition}

\newtheorem*{definition*}{Definition}
\newtheorem*{remarks*}{Remarks}
\newtheorem*{remark*}{Remark}

\begin{document}
	
	\title
	[Weighted Chui's conjecture]
	{Weighted Chui's conjecture}
	
	\author{Evgueni Doubtsov}
	\address{
		St. Petersburg Department of Steklov Mathematical Institute, Fontanka 27, St. Petersburg 191023, Russia}
	\email{dubtsov@pdmi.ras.ru}

	\author{Anton Tselishchev}
	\address{
		St. Petersburg Department of Steklov Mathematical Institute, Fontanka 27, St. Petersburg 191023, Russia}
	\email{celis\_anton@pdmi.ras.ru}
	
	\author{Ioann Vasilyev}
	\address{
		St. Petersburg Department of Steklov Mathematical Institute, Fontanka 27, St. Petersburg 191023, Russia}
	\email{ivasilyev@pdmi.ras.ru}

	\subjclass[2020]{30E20}
	\keywords{Coulomb potential, Chui's conjecture,  Cauchy transform, simplest fraction}
	\thanks{A.T. and I.V. are supported by the Foundation for the Advancement of Theoretical Physics and Mathematics ``BASIS''. \\ This research was supported by the Russian Science Foundation (grant No. 23-11-00171, https://rscf.ru/project/23-11-00171/).}

	\begin{abstract}
		The  goals of this paper are threefold. First, we show that a counterpart of the Newman bound related to the Chui conjecture is valid in the case where the gradient of Coulomb potential is generated by arbitrary positive charges placed at the boundary of the unit ball of $\R^d$. Second, we prove that our bound is sharp in the two dimensional case. Finally, we discuss a related problem where the unit charges are placed in the  unit disc.
	\end{abstract}

	\maketitle


	\section{Introduction}
	
	Let $\mathbb D$ denote the unit disc of the complex plane $\mathbb C$. Consider points $z_1, z_2, \ldots, z_n$ on the unit circle $\T=\partial \mathbb D$. The Chui conjecture, formulated in \cite{Chu71}, suggests that the quantity
	\begin{equation}\label{eq:chui}
		\int_{\D}\Big| \sum_{k=1}^n \frac{1}{z-z_k} \Big|\, d m(z),
	\end{equation}
	where $dm(z)$ denotes the Lebesgue measure on the complex plane, attains its minimum if the points $z_1, z_2, \ldots, z_n$ are uniformly distributed on the unit circle, i.e., if $z_k=e^{2\pi i k/n}$. This conjecture has the following natural physical interpretation: in order to minimize the average strength of the electrostatic field generated by $n$ unit charges on the circle, the charges should be distributed uniformly.
	
	Surprisingly, this natural conjecture remains open. Shortly after its formulation, D.J.~Newman~\cite{New72} proved that this average strength of an electrostatic field is bounded from below, i.e., there exists an absolute constant $c > 0$ such that for every $n$ and any points $z_1, z_2, \ldots z_n$ on the unit circle $\T$ the following inequality holds:
	\begin{equation}\label{eq:newbound}
		\int_{\D}\Big| \sum_{k=1}^n \frac{1}{z-z_k} \Big|\, d m(z) \ge c.
	\end{equation}
	To be more precise, the above inequality holds with $c=\pi/18$. This result is, of course, weaker than  Chui's conjecture. Indeed, it is easy to see that if $z_k=e^{2\pi i k/n}$, then the corresponding quantity~\eqref{eq:chui} in fact is bigger than $\pi/18$.

    Notice that even in the case of two charges Chui's conjecture is a non-trivial question, as F. Nazarov's proof in~\cite{MaOvFl} suggests.
	
	A more recent progress was obtained in the paper \cite{ABF21} where the Chui conjecture was proved if one replaces the $L^1$ norm in the quantity~\eqref{eq:chui} with the norm in certain Bergman spaces.
	
	Notice also that Chui's conjecture is closely related to the topic in Approximation Theory called the approximation by simplest fractions, see for instance papers \cite{BS23}, \cite{Bor16}, \cite{Kom23} and \cite{CZ23}.

\medskip
    
	In the present paper we study the following natural questions: what if we consider not unit but $n$ different charges (of the same sign)? Also, what happens if we place them on the unit sphere of $\R^d$ (from the physical point of view, the case $d=3$ is the most interesting)? Clearly, the methods of Complex Analysis have certain limitations in the latter case.
	
	It seems to be difficult to even formulate the analogue of  Chui's conjecture in this case since it is not clear how the optimal distribution of the charges on the unit sphere should look like. However, we are able to formulate and prove an analogue of the Newman bound in this case. We denote by $\Sp^{d-1}$ and $\B^d$ the unit sphere and the unit ball in $\R^d$, respectively.

	\begin{thm}\label{thm:lowerbound}
		Let $d\ge 2$, $x_1, \dots, x_n\in \Sp^{d-1}$ and $\alpha_1, \dots, \alpha_n >0$. Then 
		\begin{equation}\label{eq:lowerbound}
			\int_{\mathbb{B}^d} \left|\sum_{k=1}^n \alpha_k \frac{x_k - x}{|x_k - x|^d} \right|\, dm(x) \ge c_d 
			\frac{\sum_{k=1}^n\alpha_k^{1+\frac{2}{d}}}{\sum_{k=1}^n\alpha_k^{\frac{2}{d}}},
		\end{equation}
		where $c_d >0$ depends only on $d$, and $dm$ stands for the Lebesgue measure on $\mathbb R^d$.
	\end{thm}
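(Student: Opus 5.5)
The plan is a duality (integration by parts) argument. It rests on a sign property that holds only because the charges sit on $\Sp^{d-1}$ and are positive. Write $F(x)=\sum_k \alpha_k \frac{x_k-x}{|x_k-x|^d}$. For $|x|=1$ we have $(x_k-x)\cdot x = x_k\cdot x-1=-\tfrac12|x_k-x|^2$, hence
\[
F(x)\cdot x=-\frac12\sum_{k=1}^n \alpha_k |x-x_k|^{2-d}\le 0 \qquad (x\in\Sp^{d-1}).
\]
So the outward normal component of the field on the sphere has a definite sign and an explicit, positive kernel.

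\textbf{Integration by parts.} Since $F$ is a gradient of a harmonic function, $\operatorname{div}F=0$ in $\B^d$. Take a Lipschitz test function $\varphi\ge0$ on $\overline{\B^d}$ that vanishes in a neighbourhood of each $x_k$. Integrating by parts on $\B^d$ then produces no contribution from the singularities, and gives
\[
\Big|\sum_{k=1}^n\alpha_k\int_{\Sp^{d-1}}\varphi(x)\,|x-x_k|^{2-d}\,d\sigma(x)\Big| = 2\Big|\int_{\B^d}F\cdot\nabla\varphi\,dm\Big|\le 2\|\nabla\varphi\|_\infty\int_{\B^d}|F|\,dm .
\]
The whole problem thus reduces to constructing $\varphi$ with a small Lipschitz constant whose weighted integral against $|x-x_k|^{2-d}$ on the sphere is large.

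\textbf{First attempt: one scale.} Take $\varphi(x)=\min\bigl(1,\min_k|x-x_k|/r\bigr)$ with $r$ chosen so that the caps $\{x\in\Sp^{d-1}:|x-x_k|<r\}$ cover at most half of the sphere. Since $|x-x_k|^{2-d}\ge 2^{2-d}$ on the sphere, we get $\int_{\B^d}|F|\gtrsim r\sum_k\alpha_k$. This is sharp for equal charges when $d=2$. It fails for very unequal weights, for instance one large charge together with many tiny ones. The tiny charges force many caps, and hence a small $r$, while contributing nothing to the right-hand side.

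\textbf{Handling unequal weights.} Give each charge its own radius $r_k=c\,\alpha_k^{2/d}\big/\bigl(\sum_j\alpha_j^{2/d}\bigr)^{\cdots}$, normalised so that the caps still cover at most half of $\Sp^{d-1}$; for $d=2$ this means $\sum_k r_k\le c$. Let $\varphi$ be $1$ outside the caps and vanish at every $x_k$. Such a $\varphi$ cannot be uniformly Lipschitz with constant $1/\min_k r_k$, so I would not use the global sup of $\nabla\varphi$. Instead I would bound $\int|F||\nabla\varphi|$ piece by piece. On the cap region of $x_k$ we have $|\nabla\varphi|\le 1/r_k$. Near a large charge, the smaller charges inside its cap can be absorbed: there the field is dominated by $\alpha_k|x-x_k|^{1-d}$ except on a set of controlled measure.

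Alternatively, one can test only against the large charges. Choose $\varphi$ vanishing at the charges with $\alpha_k\ge\lambda$, and handle the small charges near which $\varphi\neq0$ separately by keeping their hemisphere-flux terms. Those terms carry the favourable sign relative to the $\varphi$-weighted sum only after a suitable choice, so the choice of $\lambda$ and the bookkeeping must be arranged accordingly. Optimising $\lambda$, or equivalently the radii $r_k$, should yield the ratio $\sum\alpha_k^{1+2/d}/\sum\alpha_k^{2/d}$. The exponent $2/d$ is what this optimisation produces when caps of radius $r_k$ and the corresponding balls of volume $r_k^{d}$ are balanced against weights $\alpha_k r_k$.

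I expect this last step to be the main obstacle: reconciling the sign of the charge (flux) terms with a multi-scale test function so that small charges neither spoil the Lipschitz bound nor enter the estimate with the wrong sign. The first thing I would try is a dyadic grouping of the $\alpha_k$ combined with the single-scale argument inside each group.
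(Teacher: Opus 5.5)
\textbf{The gap.} Your duality setup is sound. The boundary identity $F(x)\cdot x=-\frac12\sum_k\alpha_k|x-x_k|^{2-d}$ on $\Sp^{d-1}$ is correct, and so are the integration by parts and the single-scale bound $\int_{\B^d}|F|\,dm\gtrsim_d r\sum_k\alpha_k$; together they settle the case of comparable weights. The substance of the theorem is very unequal weights, and there you only outline strategies and flag them as the main obstacle. So the proposal does not prove the statement, and both outlines stall for concrete reasons.

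\emph{Variable radii.} With the heights of $\varphi$ normalised to $1$, the slopes are $1/r_k$. Then $|\int F\cdot\nabla\varphi|\le\int|F|\,|\nabla\varphi|$ gives a lower bound for $\int|F|$ only if you also have an upper bound for $|F|$ near the small charges. You have none: near a small charge lying close to a large one, $|F|$ is governed by the large charge's field, and the weight $1/r_j$ amplifies it.

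\emph{Thresholds.} Here the charges at which $\varphi\neq0$ work against you. For Lipschitz $\varphi$, excise small balls around the $x_k$. The flux through $\partial B(x_k,\varepsilon)\cap\B^d$ tends to $\frac12|\Sp^{d-1}|\alpha_k\varphi(x_k)$, so one gets exactly
\[
\int_{\B^d}F\cdot\nabla\varphi\,dm=\frac{|\Sp^{d-1}|}{2}\sum_{k=1}^n\alpha_k\varphi(x_k)-\frac12\sum_{k=1}^n\alpha_k\int_{\Sp^{d-1}}\varphi(x)\,|x-x_k|^{2-d}\,d\sigma(x).
\]
With $\varphi=0$ at the large charges and $\varphi\approx1$ at the small ones, the small charges contribute $\frac12\alpha_k\int(1-\varphi)|x-x_k|^{2-d}\,d\sigma\ge0$. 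Your main term is negative, so these contributions have the opposite sign. Dyadic grouping does not rescue this, because lower bounds obtained from different test functions cannot be added.

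\textbf{How to close it.} The missing idea is to normalise the slopes rather than the heights, so that the flux terms become the main term. Put $A=\sum_j\alpha_j$, $h_k=c\,\alpha_k/A$, and
\[
\varphi(x)=\max_k\,(h_k-|x-x_k|)_+ .
\]
This $\varphi$ is $1$-Lipschitz, and it satisfies $\varphi(x_k)\ge h_k$ and $\varphi\le\sum_k h_k\har_{B(x_k,h_k)}$. You also need the estimate $\int_{\Sp^{d-1}\cap B(y,t)}|x-x_j|^{2-d}\,d\sigma\le C_d t$, which follows by splitting according to whether $|y-x_j|\ge 2t$. Feeding these into the identity gives
\[
\int_{\B^d}|F|\,dm\ge\int_{\B^d}F\cdot\nabla\varphi\,dm\ge\frac12\Big(|\Sp^{d-1}|\,c-C_d\,c^2\Big)\frac{\sum_k\alpha_k^2}{\sum_k\alpha_k}.
\]
The map $p\mapsto\sum\alpha_k^{1+p}/\sum\alpha_k^{p}$ is nondecreasing, so this is at least the right-hand side of \eqref{eq:lowerbound}. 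For $d=2$ it is sharp by Theorem~\ref{thm:upperbound_d=2}, and for $d\ge3$ it is in general stronger.

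\textbf{Comparison with the paper.} The paper uses no test function. It bounds $|F|\ge|\langle F,x\rangle|$ pointwise on tangent balls $Q_k$ of radius proportional to $\alpha_k^{2/d}/\sum_j\alpha_j^{2/d}$, using positivity of the Poisson kernel (Lemmas~\ref{l_1} and~\ref{l_2}). It then uses the comparison in Lemma~\ref{l_3} so that, at each point, one ball dominates all the negative contributions. That pointwise domination step is where the exponent $2/d$ comes from. Your heuristic for $2/d$ does not come out of your own method, whose natural scale is $h_k\propto\alpha_k/\sum_j\alpha_j$.
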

	Passing to complex coordinates, it is easy to see that if $d=2$ and $\alpha_1=\alpha_2=\ldots=\alpha_n=1$, then this estimate coincides with Newman's bound \eqref{eq:newbound}. 

    Similar problems were considered by J. Korevaar and his coauthors, see for instance papers~\cite{Kor64} and~\cite{KM01}.
	
	We also note that the estimate~\eqref{eq:lowerbound} in the special case $d=3$ and unit values of $\alpha_k$ was posed as an open problem in \cite{Arr24}. Once again, we mention that this case has a clear physical interpretation: if $x_1, \ldots, x_n\in \Sp^{2}$, then the electrostatic field in the unit ball $\B^3$ corresponding to the Coulomb potential
	$$
	U(x) = \sum_{k=1}^n \frac{1}{|x-x_k|}
	$$
	is given by 
	$$
	\nabla U(x) = -\sum_{k=1}^n \frac{x-x_k}{|x-x_k|^3}.
	$$
	Therefore, our result states that the mean value of this electrostatic field is not too small, i.e., the charges cannot ``compensate each other''.

\medskip
    
Let $\mu$ be a measure on the closed unit disc $\overline{\mathbb D}$. Recall that its Cauchy transform is defined for $z\in \mathbb C$ with $|z|<1$ by
$$\mathcal{C}\mu(z):=\int \frac{d\mu(\xi)}{\xi-z}.$$
Suppose now that 
$$\nu=\sum_{k=1}^n \alpha_k \delta_{z_k},$$ 
with $\alpha_k$ being all positive  and $z_k\in \mathbb T$, is a discrete measure. Then
$$\mathcal{C}\nu(z)= \sum_{k=1}^n \frac{\alpha_k}{z_k-z}.$$
Notice that our result in the two dimensional case gives rise to the following interesting lower bound.
	\begin{thm}\label{cauchy}
	In the notation above we have
	$$\|\mathcal{C}\nu\|_{L^1(\mathbb D)}\geq C\frac{\sum_{k=1}^n\alpha_k^2}{\|\nu\|},$$
	where $C$ is an absolute constant and $\|\ldots\|$ denotes the total variation of a measure. 
	\end{thm}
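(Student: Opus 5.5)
This is the case $d=2$ of Theorem~\ref{thm:lowerbound}, rewritten in complex notation. Identify $\R^2$ with $\C$ and write $x=z$, $x_k=z_k$. Then
$$
\frac{x_k-x}{|x_k-x|^2}=\frac{z_k-z}{|z_k-z|^2}=\frac{1}{\overline{z_k-z}},
$$
so that
$$
\sum_{k=1}^n \alpha_k \frac{x_k - x}{|x_k - x|^2}=\overline{\sum_{k=1}^n \frac{\alpha_k}{z_k-z}}=\overline{\mathcal{C}\nu(z)}.
$$
Taking absolute values, the left-hand side of \eqref{eq:lowerbound} with $d=2$ is exactly $\|\mathcal{C}\nu\|_{L^1(\D)}$.

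On the right-hand side, with $d=2$ the exponents are $1+\frac{2}{d}=2$ and $\frac{2}{d}=1$. The quotient therefore becomes
$$
\frac{\sum_{k}\alpha_k^2}{\sum_k \alpha_k}.
$$
Since all $\alpha_k>0$, the total variation of $\nu=\sum_k\alpha_k\delta_{z_k}$ is $\|\nu\|=\sum_k\alpha_k$. Hence Theorem~\ref{thm:lowerbound} with $d=2$ gives the claim with the absolute constant $C=c_2$.

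The only points to check are the sign and conjugation in the identification, which do not affect the modulus, and that $\D$ coincides with $\B^2$. There is no real obstacle: all the analytic work lies in Theorem~\ref{thm:lowerbound} itself, which we are allowed to assume.
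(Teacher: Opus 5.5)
Your proposal is correct and matches the paper, which gives no separate proof and simply presents this result as the $d=2$ case of Theorem~\ref{thm:lowerbound}. Your identification $\frac{z_k-z}{|z_k-z|^2}=\frac{1}{\overline{z_k-z}}$ and your observation that $\|\nu\|=\sum_k\alpha_k$ for positive weights are exactly the bookkeeping needed to make that specialization explicit.
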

	
	It is natural to ask whether the estimate in Theorem~\ref{thm:lowerbound} is optimal. This question is non-trivial in our case since we need to carefully choose the positions of the points $x_1, \dots, x_n$ on the unit sphere in order to get as many cancellations as possible in the integral on the right hand side of the formula~\eqref{eq:lowerbound}. We will prove that our estimate is optimal only in the case where $d=2$.
	
	\begin{thm}\label{thm:upperbound_d=2}
		For every $n\in\mathbb{N}$ and any positive numbers $\alpha_1, \ldots, \alpha_n$ there exist points $z_1, \ldots z_n$ on the unit circle $\T$ such that
		$$
		\int_{\D}\Big| \sum_{k=1}^n \frac{\alpha_k}{z-z_k} \Big|\, d m(z)\le C \frac{\sum_{k=1}^n\alpha_k^2}{\sum_{k=1}^n \alpha_k}.
		$$
		where $C > 0$ is an absolute constant.
	\end{thm}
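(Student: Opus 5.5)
We normalize by homogeneity: both sides of the inequality in Theorem~\ref{thm:upperbound_d=2} are homogeneous of degree one in $(\alpha_1,\dots,\alpha_n)$, so we may assume $\sum_{k=1}^n\alpha_k=1$. The goal is then to find $z_1,\dots,z_n\in\T$ with $\int_\D|\sum_k \alpha_k/(z-z_k)|\,dm\le C\sum_k\alpha_k^2$.

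The construction uses the observation that the normalized arc-length measure $\sigma$ on $\T$ has vanishing Cauchy transform inside the disc. Indeed, for $|z|<1$,
$$\int\frac{d\sigma(\xi)}{\xi-z}=\int\frac{\bar\xi\,d\sigma(\xi)}{1-z\bar\xi}=\sum_{m\ge0}z^m\int\bar\xi^{m+1}\,d\sigma(\xi)=0 .$$
Since $\sum_k\alpha_k=1$, we can partition $\T$ into consecutive closed arcs $I_1,\dots,I_n$ with $\sigma(I_k)=\alpha_k$, i.e.\ $|I_k|=2\pi\alpha_k$. We take $z_k$ to be the midpoint of $I_k$. Then for $z\in\D$,
$$\sum_{k=1}^n\frac{\alpha_k}{z-z_k}=\sum_{k=1}^n\int_{I_k}\Big(\frac{1}{z-z_k}-\frac{1}{z-\xi}\Big)\,d\sigma(\xi)=:\sum_{k=1}^n E_k(z).$$
By the triangle inequality it suffices to prove $\int_\D|E_k|\,dm\le C\alpha_k^2$ for each $k$, with an absolute constant $C$.

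Fix $k$, write $\alpha=\alpha_k$, $r=|z-z_k|$, and split $\D$ into two regions.

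\emph{Near region} $r\le 10\alpha$. We bound crudely:
$$|E_k(z)|\le \frac{\alpha}{r}+\int_{I_k}\frac{d\sigma(\xi)}{|z-\xi|}.$$
Integrating over the disc $\{r\le 10\alpha\}$, the first term contributes $\int_0^{10\alpha}(\alpha/r)\,r\,dr\lesssim\alpha^2$. The second term, after Fubini, contributes at most $\sigma(I_k)\cdot\sup_\xi\int_{|z-\xi|\le 20\alpha}|z-\xi|^{-1}\,dm(z)\lesssim\alpha\cdot\alpha$.

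\emph{Far region} $r>10\alpha$. Here every $\xi\in I_k$ satisfies $|\xi-z_k|\le\pi\alpha\le r/3$, so we may Taylor expand:
$$\frac{1}{z-\xi}-\frac{1}{z-z_k}=\frac{\xi-z_k}{(z-z_k)^2}+O\Big(\frac{|\xi-z_k|^2}{r^3}\Big).$$
The quadratic remainder integrates over $I_k$ to $O(\alpha\cdot\alpha^2/r^3)$. The linear term is the dipole moment $\int_{I_k}(\xi-z_k)\,d\sigma(\xi)$. Because $z_k$ is the midpoint of the arc, the tangential parts cancel by symmetry. What remains is $z_k\,\alpha\,\big(\tfrac{\sin(\pi\alpha)}{\pi\alpha}-1\big)$, which is $O(\alpha^3)$. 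Hence on the far region
$$|E_k(z)|\lesssim\frac{\alpha^3}{r^2}+\frac{\alpha^3}{r^3}.$$
Integrating over $\{z\in\D: r>10\alpha\}$ in polar coordinates around $z_k$ with $r\le2$ gives
$$\alpha^3\log(1/\alpha)+\alpha^3\int_{10\alpha}^{\infty}r^{-2}\,dr\lesssim\alpha^2 .$$

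The main obstacle is this far-field estimate. A naive first-order bound $|E_k|\lesssim\alpha^2/r^2$ only yields $\alpha^2\log(1/\alpha)$, which is insufficient. The logarithm is removed precisely by placing $z_k$ at the midpoint, so that the dipole moment is of order $\alpha^3$ rather than $\alpha^2$. Summing the two regional bounds over $k$ gives
$$\int_\D\Big|\sum_{k=1}^n\frac{\alpha_k}{z-z_k}\Big|\,dm(z)\le C\sum_{k=1}^n\alpha_k^2,$$
which is Theorem~\ref{thm:upperbound_d=2} after undoing the normalization.
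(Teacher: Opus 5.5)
Your proposal is correct and follows essentially the same route as the paper: arcs of length proportional to $\alpha_k$ with $z_k$ at their midpoints, subtraction of the vanishing Cauchy integral of arc-length measure, the triangle inequality to reduce to a single arc, and a near/far split at distance comparable to $\alpha_k$ from $z_k$. The only difference is cosmetic and lies in the far region, where you use a Taylor expansion with an $O(\alpha^3)$ dipole moment, while the paper pairs $\theta$ with $-\theta$ to obtain an exact identity giving the bound $l^2/|z-1|^3$; both arguments exploit the midpoint symmetry in the same way, and your extra $\alpha^3/r^2$ term is harmless since $r\le 2$ on the disc.
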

	
	Notice that the positivity assumption on $\alpha_k$ in Theorem~\ref{thm:lowerbound} is essential, as the following result shows.
\begin{prop}
\label{lem1}
Let $a\in \overline{\mathbb D}$ and $b\in \overline{\mathbb D}$. Suppose that $\delta=|a-b|\leq 1$. Then
$$
\int_{\D}\left|\frac{1}{z-a}-\frac{1}{z-b}\right|dm(z)\lesssim \delta+\delta \log\Bigl(\frac{1}{\delta}\Bigr).$$
\end{prop}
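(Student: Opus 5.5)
The plan is to combine the two terms into a single fraction,
$$
\frac{1}{z-a}-\frac{1}{z-b}=\frac{a-b}{(z-a)(z-b)},
$$
so the integral to bound equals $\delta\int_{\D}\frac{dm(z)}{|z-a|\,|z-b|}$. It then suffices to prove
$$
\int_{\D}\frac{dm(z)}{|z-a|\,|z-b|}\lesssim 1+\log\frac{1}{\delta}.
$$
The case $\delta=0$ is trivial, since the left-hand side of the proposition vanishes, so we assume $0<\delta\le 1$. We make no use of the positions of $a,b$ beyond $|a|,|b|\le 1$, so the whole disc can be replaced by a disc of radius $3$ centered at $a$, which contains $\D$.

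We split $\D$ into three regions. On $E_1=\{|z-a|<\delta/2\}$ we have $|z-b|\ge\delta/2$, hence
$$
\int_{E_1}\frac{dm}{|z-a||z-b|}\le\frac{2}{\delta}\int_{|z-a|<\delta/2}\frac{dm}{|z-a|}=\frac{2}{\delta}\cdot 2\pi\cdot\frac{\delta}{2}=2\pi .
$$
The region $E_2=\{|z-b|<\delta/2\}$ is handled the same way. On the remaining region $E_3=\D\setminus(E_1\cup E_2)$ we claim $|z-b|\gtrsim|z-a|$. Indeed, $|z-b|\ge|z-a|-\delta$, so if $|z-a|\ge 2\delta$ then $|z-b|\ge|z-a|/2$. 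If instead $\delta/2\le|z-a|<2\delta$, then $|z-b|\ge\delta/2\ge|z-a|/4$.

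Given this comparability, on $E_3$ the integrand is bounded by $4|z-a|^{-2}$, so in polar coordinates around $a$,
$$
\int_{E_3}\frac{dm}{|z-a||z-b|}\le 4\int_{\delta/2\le|z-a|\le 2}\frac{dm}{|z-a|^2}=8\pi\log\frac{4}{\delta}\lesssim 1+\log\frac1\delta .
$$
Summing the three pieces and multiplying by $\delta$ gives the claimed bound $\delta+\delta\log(1/\delta)$.

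The only step needing care is the comparability $|z-b|\gtrsim|z-a|$ on $E_3$, which is the case analysis above; otherwise the argument is routine.
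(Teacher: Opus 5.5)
Your proof is correct and follows essentially the paper's route. You write the difference as $\delta/(|z-a||z-b|)$, treat small discs around $a$ and $b$ (radius $\delta$ in the paper, $\delta/2$ for you) where the other distance is at least $\delta/2$, and bound the rest by $\int |z-a|^{-2}$ in polar coordinates. The only cosmetic difference is on the complement: the paper uses $\frac{1}{|z-a||z-b|}\le\frac{1}{|z-a|^2}+\frac{1}{|z-b|^2}$ where you do a comparability case analysis. Also, your ``radius $3$'' remark is unnecessary, since $|z-a|\le 2$ is what you actually use.
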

Indeed, to see that the positivity of $\alpha_k$ is needed, it suffices to choose $a=1$ and $b=e^{i\varepsilon}$ with sufficiently small $\varepsilon>0$  in Proposition~\ref{lem1}.
	

    Here and everywhere below notation $X\lesssim Y$ means that $X\le C\cdot Y$ for some absolute constant $C > 0$.
	
	\section{Newman's bound for arbitrary charges}
	
	In this section we prove Theorem~\ref{thm:lowerbound}. The main idea of the proof is to consider appropriate orthogonal projections on 2-dimensional subspaces of $\R^d$ and then perform the computations similar to what is done in \cite{New72}.

	\subsection{Auxiliary lemmas} We begin the proof with three simple technical lemmas.
	
	\begin{lemma}\label{l_1}
		Let $y\in \Sp^{d-1}$ and $x\in \B^d$. Then
		\[
		\langle \frac{y-x}{|y-x|^d}, x\rangle \ge -\frac{1}{2|y-x|^{d-2}}
		\]
	\end{lemma}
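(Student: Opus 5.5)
Since $|y-x|^d>0$, the inequality in Lemma~\ref{l_1} is equivalent, after multiplying through by $|y-x|^d$, to
\[
\langle y-x, x\rangle \ge -\tfrac12 |y-x|^2 .
\]
So the plan is to verify this purely quadratic inequality using only $|y|=1$ and $|x|\le 1$.

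Expand the right-hand side as
\[
|y-x|^2 = |y|^2 - 2\langle y,x\rangle + |x|^2 = 1 - 2\langle y,x\rangle + |x|^2 .
\]
The left-hand side is $\langle y,x\rangle - |x|^2$. The inequality to prove then reads
\[
\langle y,x\rangle - |x|^2 \ge -\tfrac12 + \langle y,x\rangle - \tfrac12|x|^2 .
\]
The terms $\langle y,x\rangle$ cancel, and what remains is $\tfrac12|x|^2\le \tfrac12$. This holds because $x\in\B^d$.

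The argument is therefore a single algebraic identity,
\[
\langle y-x,x\rangle + \tfrac12|y-x|^2 = \tfrac12\bigl(1-|x|^2\bigr)\ge 0 .
\]
Dividing this by $|y-x|^d$ gives exactly the claim. The only point to note is the degenerate case $x=y$, where both sides are undefined; this cannot occur, since $y\in\Sp^{d-1}$ and $x$ lies in the open ball.
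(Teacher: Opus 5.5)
Your proof is correct, and it takes a more direct route than the paper. The paper restricts to the two-dimensional plane through $0$, $x$, $y$ and passes to complex coordinates $z,w$. It then rewrites the claim as $\Rl\frac{w+z}{w-z}\ge 0$ and quotes the positivity of the Poisson kernel. You stay in $\R^d$ and expand $|y-x|^2$, which gives the exact identity $\langle y-x,x\rangle+\frac12|y-x|^2=\frac12(1-|x|^2)$. Since $\Rl\frac{w+z}{w-z}=\frac{1-|z|^2}{|w-z|^2}$, this is the same underlying fact written out explicitly. Your version, however, needs no dimension reduction and no appeal to a known property.

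The exact formula also gives more than the lemma. It shows that the left-hand side of Lemma~\ref{l_2} equals $\frac{1-|x|^2}{2|y-x|^d}$. Lemma~\ref{l_2} therefore reduces to $|x-y|^2\le\frac{r}{1-r}(1-|x|^2)$ for $x$ in the tangent ball, which is exactly \eqref{e_l3_4} from the proof of Lemma~\ref{l_3}. The paper instead derives Lemma~\ref{l_2} by describing the superlevel sets of the Poisson kernel as tangent discs. That description is conceptually nice, since it motivates the choice of the balls $Q_k$.

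Your handling of the degenerate case is fine. In fact the identity gives the inequality for every $x$ with $|x|\le 1$ and $x\ne y$.
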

	\begin{proof}
		Consider the 2-dimensional plane containing $0$, $x$ and $y$ and introduce complex coordinates in it.
		Given $u,v\in\C$, recall that $\langle u, v \rangle = \Rl(u \overline{v})=\Rl (\overline{u}v)$.
		
		We have to prove that
		\[
		\langle \frac{y-x}{|y-x|^{2}}, x\rangle  \ge -\frac{1}{2}.
		\]
		Replacing $x$ and $y$ with their complex coordinates $z$ and $w$ (where $|z| < 1$, $|w|=1$), we get that this inequality is equivalent to
		\[
		\Rl \left( \frac{z}{w-z}\right) = \Rl \left( \frac{\overline{w} -\overline{z}}{|w-z|^2}z\right) \ge -\frac{1}{2}.
		\]
		The above estimate is equivalent to
		\[
		\Rl \left( \frac{w+ z}{w-z}\right) \ge 0,
		\]
		which is a well-known property of the Poisson kernel.
	\end{proof}
	
	The next lemma shows that for certain $x\in\B^d$ we in fact have a better estimate.
	
	\begin{lemma}\label{l_2}
		Let $y\in \Sp^{d-1}$, $0<r<\frac{1}{2}$, and $Q$ be the ball of radius $r$ tangent from inside
		to $\Sp^{d-1}$ at the point $y$. Then for every $x\in Q$ we have
		\[
		\langle \frac{y-x}{|y-x|^d}, x\rangle + \frac{1}{2|y-x|^{d-2}} \ge\frac{1-r}{2r}\frac{1}{|y-x|^{d-2}}.
		\]
	\end{lemma}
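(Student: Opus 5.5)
Multiplying the desired inequality by $|y-x|^{d-2}>0$ reduces Lemma~\ref{l_2} to
\[
\frac{\langle y-x, x\rangle}{|y-x|^{2}} + \frac12 \ge \frac{1-r}{2r},
\quad\text{i.e.}\quad
\frac{\langle y-x, x\rangle}{|y-x|^{2}} \ge \frac{1-2r}{2r}.
\]
The plan is to compute both sides explicitly using only $|y|=1$ and the geometry of the tangent ball $Q$. As in Lemma~\ref{l_1}, the quantity involves only the vectors $x$ and $y$, so no passage to complex coordinates is strictly needed.

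\textbf{Step 1 (rewrite the numerator).} Write $u=y-x$, so that $x=y-u$. Then
\[
\langle u, x\rangle = \langle u, y\rangle - |u|^2 .
\]
Hence the target becomes
\[
\frac{\langle u,y\rangle}{|u|^2} - 1 \ge \frac{1-2r}{2r},
\quad\text{i.e.}\quad
\frac{\langle u,y\rangle}{|u|^2}\ge \frac{1}{2r}.
\]

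\textbf{Step 2 (describe $Q$).} The ball $Q$ has center $(1-r)y$ and radius $r$. Thus $x\in Q$ means $|x-(1-r)y|<r$. In terms of $u$ this reads $|ry-u|<r$. Squaring gives
\[
r^2-2r\langle u,y\rangle+|u|^2<r^2,
\quad\text{i.e.}\quad
|u|^2<2r\langle u,y\rangle .
\]
This is exactly $\langle u,y\rangle/|u|^2>1/(2r)$; note that $u\neq 0$ because $y\notin Q$. (On the closed ball one gets $\ge$, which is also fine.)

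\textbf{Step 3 (conclude).} Substituting Step~2 into Step~1 gives the reduced inequality, and multiplying back by $|y-x|^{-(d-2)}$ yields the claim. The condition $r<1/2$ is not actually needed for the algebra. It only ensures $Q\subset \B^d$ and that the right-hand side is positive, which will matter later in the Newman-type argument.

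There is no real obstacle in this argument. The only point needing care is the bookkeeping in Step~1, namely the sign and the cancellation of the $+\tfrac12$ term against $\frac{1-r}{2r}$. An equivalent way to see the whole computation is the complex-plane picture of Lemma~\ref{l_1}: the disc $Q$ is a level set of the Poisson kernel $\Rl\frac{w+z}{w-z}\ge \frac{1-r}{r}$, since horocycles are exactly such level sets. This gives a one-line alternative proof.
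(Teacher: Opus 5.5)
Your proof is correct. It reaches the result by a different route from the paper, although the underlying identity is the same.

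\textbf{The paper's route.} As in Lemma~\ref{l_1}, the paper passes to complex coordinates $z,w$ in a $2$-plane through $0$, $x$, $y$. After multiplying by $|y-x|^{d-2}$, it rewrites the left-hand side as $\frac12\Rl\frac{w+z}{w-z}$. It then quotes, without proof, the ``standard computation'' that $\{\Rl\frac{w+z}{w-z}\ge c\}$ is the disc of radius $\frac{1}{1+c}$ tangent to $\T$ at $w$. The ``one-line alternative'' you mention at the end is exactly this argument.

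\textbf{Your route.} Your Steps~1--2 work verbatim in $\R^d$ and in effect carry out that standard computation. Since $2\langle u,y\rangle-|u|^2=1-|x|^2$, your condition $\langle u,y\rangle/|u|^2>\frac{1}{2r}$ is literally $\frac{1-|x|^2}{|y-x|^2}>\frac{1-r}{r}$. This is the Poisson-kernel description of the horoball $Q$, because $\Rl\frac{w+z}{w-z}=\frac{1-|z|^2}{|w-z|^2}$.

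\textbf{Comparison.} The paper's version is shorter once the horodisc fact is granted, and it reuses the reduction from Lemma~\ref{l_1}. Yours is self-contained and avoids the reduction to a plane. It also shows that the inequality, in strict form, is actually equivalent to $x\in Q$, not merely implied by it.

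\textbf{A small correction to your side remark.} $Q\subset\B^d$ and positivity of $\frac{1-r}{2r}$ already hold for every $r<1$, so that is not what $r<\frac12$ is for. The hypothesis $r<\frac12$ is used later in the main proof: for $\frac{1-r_k}{2r_k}\ge\frac{1}{4r_k}$, and for $\frac{1-r_j}{1-r_k}\le 2$ after applying Lemma~\ref{l_3}.
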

	\begin{proof}
		As in the proof of Lemma~\ref{l_1}, we replace $x$ and $y$ by $z$ and $w$, respectively.
		Therefore, it suffices to prove that
		\[
		\Rl \left( \frac{w+ z}{w-z}\right) \ge \frac{1-r}{r},\quad z, w\in\C, \ |w|=1, \ z\in \mathcal Q_2,
		\]
		where $\mathcal Q_2$ denotes the disc of radius $r$ tangent from inside to the unit disc $\D$ at $w$.
		Standard computations guarantee that the set
		\[
		\left\{ z\in\C: \Rl \left( \frac{w+ z}{w-z}\right) \ge c \right\}
		\]
		is the disc of radius $\frac{1}{1+c}$ tangent to $\D$ at $w$.
		It remains to observe that $r=\frac{1}{1+c}$ means $c= \frac{1-r}{r}$.
	\end{proof}
	
	Our final lemma is a simple geometric observation.
	
	\begin{lemma}\label{l_3}
		Let $y_1, y_2 \in \Sp^{d-1}$, $0< r_1, r_2 < \frac{1}{2}$.
		Let $B_i$, $i=1,2$, denote the ball of radius $r_i$ tangent from inside to $\Sp^{d-1}$ at $y_i$.
		Assume that $x\in B_1$ and $x\notin B_2$.
		Then
		\[
		\frac{|x- y_1|}{|x- y_2|} \le \left(\frac{r_1}{r_2} \cdot \frac{1-r_2}{1-r_1}  \right)^\frac{1}{2}.
		\]
	\end{lemma}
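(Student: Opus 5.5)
The plan is to describe each tangent ball as a superlevel set of the Poisson-type quantity
\[
P_y(x)=\frac{1-|x|^2}{|x-y|^2},\qquad y\in\Sp^{d-1},\ x\in\B^d,
\]
and then simply divide the two resulting inequalities.

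The first step is to check that for $c>0$ the set $\{x: 1-|x|^2\ge c\,|x-y|^2\}$ is exactly the closed ball of radius $\frac1{1+c}$ tangent from inside to $\Sp^{d-1}$ at $y$. Expanding with $|y|=1$ gives
\[
(1+c)|x|^2-2c\langle x,y\rangle + c-1\le 0,
\]
which is equivalent to
\[
\Bigl|x-\tfrac{c}{1+c}\,y\Bigr|^2\le \Bigl(\tfrac{1}{1+c}\Bigr)^2 .
\]
This is the claimed ball. The same fact is what was used in the proof of Lemma~\ref{l_2}, since in complex coordinates in the plane through $0,x,y$ we have $\Rl\frac{w+z}{w-z}=\frac{1-|z|^2}{|w-z|^2}$. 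Taking $c_i=\frac{1-r_i}{r_i}$, which corresponds to radius $r_i$, yields the following two facts:
\[
x\in B_1 \ \Longrightarrow\ 1-|x|^2\ge \frac{1-r_1}{r_1}|x-y_1|^2,
\]
\[
x\notin B_2 \ \Longrightarrow\ 1-|x|^2< \frac{1-r_2}{r_2}|x-y_2|^2 .
\]

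The second step is to combine these. We get
\[
\frac{1-r_1}{r_1}|x-y_1|^2\le 1-|x|^2<\frac{1-r_2}{r_2}|x-y_2|^2 .
\]
Here $x\ne y_2$, since $y_2$ lies on the boundary of $B_2$ and hence in the closed ball; so we may divide by $|x-y_2|^2$. Taking square roots then gives exactly the stated bound
\[
\frac{|x-y_1|}{|x-y_2|}\le\Bigl(\frac{r_1}{r_2}\cdot\frac{1-r_2}{1-r_1}\Bigr)^{1/2}.
\]
The degenerate case $|x|=1$ forces $x=y_1$, where the left-hand side is $0$, so it causes no trouble.

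There is no real obstacle here. The only point needing care is the identification of the tangent ball with the level set of $P_y$ in arbitrary dimension, and the completed-square computation above handles it directly without reducing to the plane.
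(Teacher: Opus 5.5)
Your proof is correct and is essentially the paper's argument. Both reduce $x\in B_1$ and $x\notin B_2$ to $|x-y_1|^2\le\frac{r_1}{1-r_1}(1-|x|^2)$ and $|x-y_2|^2\ge\frac{r_2}{1-r_2}(1-|x|^2)$ and then divide; the paper merely reaches these via intermediate bounds on $1-\langle x,y_i\rangle$ instead of your one-line Poisson-kernel level-set identity. The paper takes the balls to be open, so $x\notin B_2$ gives only the non-strict version of your second inequality, and $x\ne y_2$ then follows from $|x|<1$; this changes nothing.
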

	\begin{proof}
		We have $x\in B_1$ if and only if $|x- (1-r_1)y_1|^2< r_1^2$, that is,
		\begin{equation}\label{e_l3_1}
			|x|^2 - 2 (1-r_1)\langle x, y_1 \rangle +1 - 2r_1 <0.
		\end{equation}
		Rewrite the above estimate as
		\begin{equation}\label{e_l3_2}
			|x- y_1|^2 \le 2r_1(1 - \langle x, y_1 \rangle).
		\end{equation}
		By \eqref{e_l3_1},
		\begin{equation}\label{e_l3_3}
			1-\langle x, y_1 \rangle \le \frac{1-|x|^2}{2-2r_1}.
		\end{equation}
		By \eqref{e_l3_2} and \eqref{e_l3_3},
		\begin{equation}\label{e_l3_4}
			|x- y_1|^2 \le \frac{r_1}{1-r_1}(1-|x|^2).
		\end{equation}
		
		Analogously, we have $x\notin B_2$ if and only if $|x|^2 - 2 (1-r_2)\langle x, y_2 \rangle +1 - 2r_2 \ge 0$,
		and we deduce that
		\begin{equation}\label{e_l3_5}
			|x- y_2|^2 \ge \frac{r_2}{1-r_2}(1-|x|^2).
		\end{equation}
		Combining \eqref{e_l3_4} and \eqref{e_l3_5}, we obtain the conclusion of the lemma.
	\end{proof}
	
	\subsection{Proof of Theorem~\ref{thm:lowerbound}} Now we apply the above lemmas in order to prove Theorem~\ref{thm:lowerbound}.
	
	Put
	\[
	G = \sum_{j=1}^n \alpha_j^{\frac{2}{d}}, \qquad r_k =\frac{1}{2^{d+2}} \frac{\alpha_k^{\frac{2}{d}}}{G}, \; k=1,\ldots,n.
	\]
	Let $Q_k$ denote the ball of radius $r_k$ tangent from inside to $\Sp^{d-1}$ at $x_k$.
	Put $Q = \cup_1^n Q_k$ and let $I$ denote the integral which we need to estimate, i.e., 
	$$
	I = 	\int_{\mathbb{B}^d} \left|\sum_{k=1}^n \alpha_k \frac{x_k - x}{|x_k - x|^d} \right|\, dm(x).
	$$
	Then
	\[
	\begin{split}
		I 
		&\ge \int_{Q} \left|\sum_{k=1}^n \langle \alpha_k \frac{x_k - x}{|x_k - x|^d}, x \rangle \right|\, dm(x) \\
		&\ge \int_{Q} \left\{ \left|\sum_{k=1}^n \left(\langle \alpha_k \frac{x_k - x}{|x_k - x|^d}, x \rangle 
		+ \frac{\alpha_k}{2|x_k - x|^{d-2}}\right) \right|
		- \left|\sum_{k=1}^n \frac{\alpha_k}{2|x_k - x|^{d-2}} \right|\right\}\, dm(x)
	\end{split}
	\]
	Now, we apply Lemma~\ref{l_2} for $x\in Q_k$ and Lemma~\ref{l_1} for $x\notin Q_k$ and conclude that
	\[
	\begin{split}
		\alpha_k\langle \frac{x_k - x}{|x_k - x|^d}, x \rangle 
		+  \frac{\alpha_k}{2|x_k - x|^{d-2}}
		&\ge \frac{1-r_k}{2r_k} \frac{\alpha_k}{|x_k - x|^{d-2}} \har_{Q_k}(x) \\
		&\ge \frac{1}{4r_k} \frac{\alpha_k}{|x_k - x|^{d-2}} \har_{Q_k}(x),
	\end{split}
	\]
	since $r_k < \frac{1}{2}$. Note that by the definition of $r_k$ the latter quantity equals
	$$
	2^d {\alpha_k}^{1-\frac{2}{d}} G \frac{1}{|x_k - x|^{d-2}} \har_{Q_k}(x).
	$$
    
	Therefore,
	\[
	\begin{split}
		I 
		&\ge \int_{Q} \left\{ \left|\sum_{k=1}^n 2^d  G \frac{{\alpha_k}^{1-\frac{2}{d}}}{|x_k - x|^{d-2}} \har_{Q_k}(x) \right|
		- \left|\sum_{k=1}^n \frac{\alpha_k}{2|x_k - x|^{d-2}} \right|\right\}\, dm(x) \\
		&= 
		\int_{Q} \left( \sum_{k=1}^n 2^{d-1}  G \frac{{\alpha_k}^{1-\frac{2}{d}}}{|x_k - x|^{d-2}} \har_{Q_k}(x)
		\right)\, dm(x) \\
		&+ \int_{Q} \left( \sum_{k=1}^n 2^{d-1}  G \frac{{\alpha_k}^{1-\frac{2}{d}}}{|x_k - x|^{d-2}} \har_{Q_k}(x)
		- \sum_{j=1}^n \frac{\alpha_j}{2|x_j - x|^{d-2}}\right)\, dm(x)
		=: A + B.
	\end{split}
	\]
	
	Since $|x-x_k| \le r_k$ for $x\in Q_k$, we have
	\[
	\begin{split}
		A &= \sum_{k=1}^n\int_{Q_k}   2^{d-1}  G \frac{{\alpha_k}^{1-\frac{2}{d}}}{|x_k - x|^{d-2}}
		\, dm(x)\\
		&\ge \sum_{k=1}^n m(Q_k) 2^{d-1}  G {\alpha_k}^{1-\frac{2}{d}} r_k^{2-d}
		\\ &=  \sum_{k=1}^n\frac{m(Q_k)}{r_k^d}\cdot \frac{\alpha_k^{\frac{4}{d}}}{2^{2d+4}G^2}\cdot 2^{d-1}G {\alpha_k}^{1-\frac{2}{d}} \\
		&= c_d \frac{\sum_{k=1}^n\alpha_k^{1+\frac{2}{d}}}{\sum_{k=1}^n\alpha_k^{\frac{2}{d}}}.
	\end{split}
	\]
	Thus, to prove the required estimate, we have to show that $B \ge 0$.
	Therefore, it suffices to prove that
	\begin{equation}\label{e_B_0}
		\sum_{k=1}^n 2^{d-1}  G \frac{{\alpha_k}^{1-\frac{2}{d}}}{|x_k - x|^{d-2}} \har_{Q_k}(x)
		\ge \sum_{j=1}^n \frac{\alpha_j}{2|x_j - x|^{d-2}}, \quad x \in Q.
	\end{equation}
	Fix $x\in Q$ and put $E_x = \{j: x\in Q_j\}\neq\varnothing$.
	Select $k\in E_x$ such that
	\begin{equation}\label{e_min}
		\frac{|x_k - x|^{2}}{r_k} \le \frac{|x_j - x|^{2}}{r_j} \quad\textrm{for all}\ j \in E_x.
	\end{equation}
	We claim that
	\begin{equation}\label{e_star}
		2^d \frac{\alpha_k^{1-\frac{2}{d}}}{|x_k - x|^{d-2}} \ge \frac{\alpha_j^{1-\frac{2}{d}}}{|x_j - x|^{d-2}}
	\end{equation}
	Observe that \eqref{e_star} guarantees that
	\[
	\sum_{j=1}^n 2^{d-1} \alpha_j^\frac{2}{d}\frac{\alpha_k^{1-\frac{2}{d}}}{|x_k - x|^{d-2}}
	\ge \sum_{j=1}^n \frac{\alpha_j}{2|x_j - x|^{d-2}}.
	\]
	The above estimate can be rewritten as
	$$
	2^{d-1}  G \frac{{\alpha_k}^{1-\frac{2}{d}}}{|x_k - x|^{d-2}} \ge \sum_{j=1}^n \frac{\alpha_j}{2|x_j - x|^{d-2}},
	$$ 
	and since $\har_{Q_k}(x) = 1$, it implies \eqref{e_B_0}. Hence, in order to finish the argument, we need to prove \eqref{e_star}.
	
	We consider two cases. Firstly, assume that $x\in Q_j$. Then \eqref{e_min} holds and we obtain
	\[
	\frac{\alpha_k^{\frac{2}{d}}}{|x_k - x|^{2}} \ge \frac{\alpha_j^{\frac{2}{d}}}{|x_j - x|^{2}},
	\]
	which implies \eqref{e_star}.
	
	Secondly, assume that $x\notin Q_j$. Then, by Lemma~\ref{l_3},
	\[
	\frac{|x_k - x|}{|x_j - x|} \le \left( \frac{r_k}{r_j} \frac{1-r_j}{1-r_k} \right)^\frac{1}{2}
	\le 2\left(\frac{r_k}{r_j} \right)^\frac{1}{2} = 2 \frac{\alpha_k^\frac{1}{d}}{\alpha_j^\frac{1}{d}}.
	\]
	The above estimate implies \eqref{e_star}.
	This finishes the proof of the theorem.
	
	\section{Optimality of the estimate in the case $d=2$}
    		\subsection{Reduction to an estimate of a single fraction}
	
	We now concentrate on the proof of Theorem~\ref{thm:upperbound_d=2}. To this end, fix numbers $\alpha_k>0$ for $1\leq k\leq n$. Put $$A=\sum_{k=1}^n\alpha_k, \qquad B=\sum_{k=1}^n\alpha_k^2, \qquad  l_k=\frac{2\pi\alpha_k}{A}.$$ Then $\sum_{k=1}^n l_k=2\pi$. Now consider pairwise disjoint semi-closed intervals $I_k\subset [-\pi,\pi)$, $k=1, \ldots, n$  such that $|I_k|=l_k$ and  $\bigcup_{k=1}^n I_k=[-\pi,\pi)$. We choose $z_k=e^{i\theta_k}$ where $\theta_k$ is the middle of the interval $I_k$.
	
	We need to prove that
		\begin{equation}\label{sharp}
			\int_{\mathbb D} |F(z)| dm(z)\lesssim \frac{B}{A},
		\end{equation}
		where we denote for $z\in \mathbb D$ as follows:
        $$F(z):=\sum_{k=1}^n \frac{\alpha_k}{(z-z_k)}.$$

		The main idea of the proof is to use the following identity:
		$$
		\int_{-\pi}^\pi\frac{d\theta}{z-e^{i\theta}}=0,
		$$
		valid for all $|z|<1$. To see that this is true it suffices first to write 
		$$\frac{1}{(z-e^{i\theta})}=-e^{-i\theta}(1-ze^{-i\theta})^{-1}=-e^{-i\theta}\sum_{j=0}^\infty (ze^{-i\theta})^j,$$ 
		where the series in question converges since $|z|<1$, and further integrate this series term-wise.
Our choice to subtract the mean of the Cauchy kernel is inspired by the well-known idea in the theory of singular integral operators, see for instance~\cite{Vas20}.
		
		Therefore, we write
		\[
		\begin{split}
			\int_{\mathbb D} |F(z)| dm(z) =& \int_{\mathbb D} \Bigl|F(z) -\frac{A}{2\pi}\int_{-\pi}^\pi\frac{d\theta}{z-e^{i\theta}}\Bigr| dm(z)\\
			= 
			&\int_{\mathbb D} \Bigl|\sum_{k=1}^n \frac{\alpha_k}{z-z_k} -\frac{A}{2\pi} \sum_{k=1}^n \int_{I_k}\frac{d\theta}{z-e^{i\theta}}\Bigr| dm(z) \\
			&\leq \frac{A}{2\pi} \sum_{k=1}^n \int_{\mathbb D} \Bigl|\frac{l_k}{z-z_k} - \int_{I_k}\frac{d\theta}{z-e^{i\theta}}\Bigr| dm(x) \\
			&= \frac{A}{2\pi} \sum_{k=1}^n l_k\int_{\mathbb D} \Bigl|\frac{1}{z-z_k} - \frac{1}{l_k}\int_{I_k}\frac{d\theta}{z-e^{i\theta}}\Bigr| dm(x).
		\end{split}
		\]
		Notice that we are done once we prove the estimate
		\begin{equation}\label{main}
			\int_{\mathbb D} \Bigl|\frac{1}{z-z_k} -\frac{1}{l_k}\int_{I_k} \frac{d\theta}{z-e^{i\theta}}\Bigr| dm(z)\lesssim l_k,
		\end{equation}
		since in this case the quantity $\int_{\D}|F(z)|\, dm(z)$ will be estimated by
		$$
		\frac{A}{2\pi}\sum_{k=1}^n l_k^2 \lesssim A\sum_{k=1}^n \frac{\alpha_k^2}{A^2}=\frac{B}{A}.
		$$

        		\subsection{Estimate of a single simple fraction}
		It remains now only to prove the following statement.
		\begin{lemma}
			Let $I\subset [-\pi,\pi)$ be a semi-closed interval. Then
			\begin{equation}\label{auxlem}
				\int_{\mathbb D} \Bigl|\frac{1}{z-w} -\frac{1}{l}\int_{I} \frac{d\theta}{z-e^{i\theta}}\Bigr| dm(z)\lesssim l,
			\end{equation}
			where $w=e^{i\theta_0}$ with $\theta_0$ being the middle of $I$ and $l$ is the length of $I$.
		\end{lemma}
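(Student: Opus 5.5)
We write the integrand as an average of differences of simple fractions:
$$
\frac{1}{z-w} -\frac{1}{l}\int_{I} \frac{d\theta}{z-e^{i\theta}} = \frac{1}{l}\int_I \Bigl(\frac{1}{z-w}-\frac{1}{z-e^{i\theta}}\Bigr)d\theta .
$$
If $l$ is bounded below (say $l\ge 1$), the estimate is trivial, since each simple fraction with pole on $\T$ has $L^1(\D)$ norm bounded by an absolute constant, and $l\le 2\pi$. So we assume $l$ is small. A direct use of Minkowski's inequality together with Proposition~\ref{lem1} would only give $\frac1l\int_I \delta(\theta)\log(1/\delta(\theta))\,d\theta\approx l\log(1/l)$, with $\delta(\theta)=|w-e^{i\theta}|$. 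This loses a logarithm, so the averaging must be exploited, and we split the disc into a near and a far region.

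\emph{Far region} $\{z\in\D: |z-w|\ge 2l\}$. Here we Taylor expand each difference around $w$. With $\zeta=e^{i\theta}$,
$$\frac{1}{z-\zeta}=\frac{1}{z-w}+\frac{\zeta-w}{(z-w)^2}+O\Bigl(\frac{|\zeta-w|^2}{|z-w|^3}\Bigr),$$
which is valid because $|\zeta-w|\le l/2\le |z-w|/4$. The key point is the cancellation of the first-order term. Since $\theta_0$ is the midpoint of $I$,
$$\frac1l\int_I (e^{i\theta}-e^{i\theta_0})\,d\theta = e^{i\theta_0}\Bigl(\frac{\sin(l/2)}{l/2}-1\Bigr)=O(l^2).$$
Hence the averaged integrand is $O(l^2/|z-w|^2 + l^2/|z-w|^3)$. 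Integrating over $\{2l\le |z-w|\le 2\}$ in polar coordinates around $w$, the term $l^2|z-w|^{-3}$ contributes $l^2\int_{2l}^2 \rho^{-2}\,d\rho\lesssim l$. The term $l^2|z-w|^{-2}$ contributes $l^2\log(1/l)\lesssim l$. So the far region gives $O(l)$.

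\emph{Near region} $\{z\in\D:|z-w|<2l\}$. Here we bound the two pieces crudely and separately. First, $\int_{|z-w|<2l}|z-w|^{-1}\,dm\lesssim l$. Second, for each $\theta\in I$ the point $e^{i\theta}$ lies within $l/2$ of $w$, so the disc $\{|z-w|<2l\}$ is contained in $\{|z-e^{i\theta}|<3l\}$. Therefore
$$\int_{|z-w|<2l}\Bigl|\frac1l\int_I\frac{d\theta}{z-e^{i\theta}}\Bigr|\,dm(z)\le \frac1l\int_I\int_{|z-e^{i\theta}|<3l}\frac{dm(z)}{|z-e^{i\theta}|}\,d\theta\lesssim l,$$
by Fubini. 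Adding the near and far regions gives \eqref{auxlem}.

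The main obstacle is the far region. There, the cancellation of the linear term by the symmetric (midpoint) choice of $w$ is exactly what removes the logarithmic loss that Proposition~\ref{lem1} alone would produce. The remaining care is in making the second-order remainder uniform, which is routine since $|\zeta-w|/|z-w|\le 1/4$.
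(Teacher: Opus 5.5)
Your proof is correct and follows essentially the same route as the paper: a near/far split at scale comparable to $l$, the same Fubini-plus-disc-containment estimate near the pole, and in the far region the midpoint symmetry of $I$ cancelling the first-order term so that the integrand decays like $l^2/|z-w|^3$. The only difference is in how that decay is obtained: the paper pairs $\theta$ with $-\theta$ to get an exact identity with numerator $(1-\cos\theta)(z+1)$, while you use a second-order geometric-series expansion, and your extra $l^2/|z-w|^2$ term is harmless since it is dominated by $2l^2/|z-w|^3$ on $\D$.
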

		\begin{proof}
			Without loss of generality we assume that $w=1$ and thus $I=[-l/2,l/2)$. We split the exterior integral over $\mathbb D$ into two parts: a neighborhood of the pole $1$ and its compliment. 
			
			We first treat the  neighborhood term. Integration in polar coordinates and the Fubini theorem yield
			\[
			\begin{split}
				I_1&:=\int_{|z-1|\leq 4l} \Bigl|\frac{1}{z-1} - \frac{1}{l} \int_{-l/2}^{l/2} \frac{d\theta}{z-e^{i\theta}}\Bigr| dm(z) \\
				&\leq\int_{|z-1|\leq 4l} \frac{dm(z)}{|z-1|} + \int_{|z-1|\leq 4l}\frac{1}{l} \int_{-l/2}^{l/2} \frac{d\theta}{|z-e^{i\theta}|}dm(z)\\
				&\leq l + \frac{1}{l} \int_{-l/2}^{l/2}\int_{|z-1|\leq 4l} \frac{dm(z)}{|z-e^{i\theta}|}d\theta.
			\end{split}
			\]
			Notice that $\{z:|z-1|\leq 4l\}\subset \{z:|z-e^{i\theta}|\leq 10l\}$. Hence
			$$
			I_1\lesssim l+ \frac{1}{l} \int_{-l/2}^{l/2}\int_{|z-e^{i\theta}|\leq 10l} \frac{dm(z)}{|z-e^{i\theta}|}d\theta\lesssim l,
			$$
			where in the last inequality we simply used the integration in polar coordinates once again.
			
			Now we proceed to the integral over the compliment of the neighborhood of the pole. We first observe that
			$$
			\frac{1}{l} \int_{-l/2}^{l/2} \frac{d\theta}{z-e^{i\theta}}=\frac{1}{l}\int_{0}^{l/2} \Big( \frac{1}{z-e^{i\theta}} + \frac{1}{z-e^{-i\theta}} \Big)\, d\theta = \frac{1}{l} \int_0^{l/2}\frac{2z-2\cos\theta}{(z-e^{i\theta})(z-e^{-i\theta})}d\theta.
			$$
			Hence we can write as follows:
			\[
			\begin{split}
				\biggl|\frac{1}{z-1} &- \frac{1}{l} \int_{-l/2}^{l/2} \frac{d\theta}{z-e^{i\theta}}\biggr| \\
                &=\biggl|\frac{2}{l}\int_0^{l/2}\frac{1}{z-1}-\frac{z-\cos\theta}{(z-e^{i\theta})(z-e^{-i\theta})}d\theta\biggr|\\
				&=\biggl|\frac{2}{l}\int_0^{l/2}\frac{(1-\cos(\theta))(z+1)}{(z-1)(z-e^{i\theta})(z-e^{-i\theta})}d\theta\biggr|\\
                &\lesssim \frac{1}{l}\int_0^{l/2}\frac{\theta^2}{|z-1||z-e^{i\theta}||z-e^{-i\theta}|}d\theta \lesssim \frac{l^2}{|z-1|^3},
			\end{split}
			\]
			where in the last inequality we made use of the facts that $|z-1|\lesssim|z-e^{i\theta}|$ and that $|z-1|\lesssim|z-e^{-i\theta}|$ which in turn is true since $|z-1|>4l$ and $|e^{\pm i\theta}-1|\le l$.
			
			We are now ready to conclude as follows
			\[
			\begin{split}
				I_2&:=\int_{|z-1|\geq 4l}\biggl|\frac{1}{z-1} - \frac{1}{l} \int_{-l/2}^{l/2} \frac{d\theta}{z-e^{i\theta}}\biggr|dm(z) \\
                &\lesssim l^2\int_{|z-1|\geq 4l}\frac{dm(z)}{|z-1|^3}\\
				&\lesssim l^2\int_{4l}^\infty\frac{r dr}{r^3}\lesssim l.
			\end{split}
			\]
			It remains to collect the estimates and the proof of the lemma is finished.
		\end{proof}

        	\section{Concluding remarks and open problems}\label{s_remarks}

We finish the paper by proving Proposition 1.4 and posing several open questions.

		\subsection{Proof of Proposition~\ref{lem1}} 
        \begin{proof}
Denote $U_a=\{z\in\D: |z-a|<\delta\} \; \text{ and } \; U_b=\{z\in\D: |z-b|<\delta\}$ and put $U=U_a\cup U_b$. Notice that if $z\in U\cap \{z:|z-a|\geq |z-b|\}$ then $|a-b|\leq |z-a|+|z-b|\leq 2|z-a|$ and hence $|z-a|\geq \delta/2$ for such $z$. Analogously, once $z\in U\cap \{z:|z-a|< |z-b|\}$, we have $|z-b|\geq \delta/2$. We first integrate over the set $U$ using all this:

\[
			\begin{split}
I_1&:=\int_{U}\left|\frac{1}{z-a}-\frac{1}{z-b}\right|dm(z) \\
&\leq  \delta \int_U\frac{dm(z)}{|z-a| |z-b|} \\
&\leq  2 \int_{U\cap \{|z-a|\geq |z-b|\}} \frac{dm(z)}{|z-b|} + 2\int_{U\cap \{|z-a|< |z-b| \}}\frac{dm(z)}{|z-a|}\\
&\leq 2 \int_{U_b} \frac{dm(z)}{|z-b|}+ 2\int_{U_a}\frac{dm(z)}{|z-a|}\lesssim \delta.
\end{split}
			\]

It remains to estimate the integral
\[
			\begin{split}
I_2&:=\int_{\D\backslash U}\left|\frac{1}{z-a}-\frac{1}{z-b}\right|dm(z) \\
&\leq \delta \int_{\D\backslash U} \frac{1}{|z-a|^2} + \frac{1}{|z-b|^2} dm(z)  \lesssim \delta\biggl( \log\Bigl(\frac{1}{\delta}\Bigr)+1 \biggr),
\end{split}
			\]
where we have used an integration in polar coordinates in the last estimate above. 

The Proposition hence follows,  since the integral that is of interest to us is equal to $I_1+I_2$.
\end{proof}
		\subsection{Open questions}
 The following questions remain open.

	First, we would like to ask whether the claim of our Theorem~\ref{thm:lowerbound} in two dimensions holds in the  case where the poles $z_k$ lie not only on the unit circle, but are also allowed to be inside the unit disc. We  know that this is true once the sum 
    $$\sum_{k=1}^n\mathrm{dist}(z_k,\mathbb T)\log\left(\frac{1}{\mathrm{dist}(z_k, \mathbb T)}\right)$$
    is small enough. Indeed, in this case one can push the poles $z_k$ out to the boundary $\mathbb T$ and reduce the needed estimate to the Newman bound~\eqref{eq:newbound}. The corresponding error can be controlled, thanks to  Proposition~\ref{lem1}. We also know this is true in case where the sum $\sum_{k=1}^n\mathrm{dist}(z_k,\mathbb T)$ is bounded from below by an absolute constant, as the following lemma gives.
    \begin{lemma}
       Suppose that $z_k\in \D$ for $k\in \{1,\ldots,n\}$. Then,
        $$2\pi\sum_{k=1}^n \mathrm{dist}(z_k,\mathbb T) \leq \int_{\mathbb D} \biggl|\sum_{k=1}^n \frac{1}{z-z_k}\biggr| dm(z).$$
    \end{lemma}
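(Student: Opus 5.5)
The plan is to integrate over circles centred at the origin and use a mean-value identity for the Cauchy kernel. Put $f(z)=\sum_{k=1}^n \frac{1}{z-z_k}$ and write the area integral in polar coordinates:
$$
\int_{\D}|f(z)|\,dm(z)=\int_0^1 r\int_0^{2\pi}|f(re^{it})|\,dt\,dr .
$$
For each fixed $r\in(0,1)$ I will bound the inner integral from below by $2\pi N(r)/r$, where $N(r)=\#\{k:\ |z_k|<r\}$. Only radii with $r\ne |z_k|$ for all $k$ are needed, since the remaining radii form a null set.

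The key identity is the following. For $\zeta=re^{it}$ and $|w|\neq r$,
$$
\frac{1}{2\pi}\int_0^{2\pi}\frac{\zeta}{\zeta-w}\,dt=\begin{cases}1,& |w|<r,\\ 0,& |w|>r.\end{cases}
$$
If $|w|<r$, expand $\frac{\zeta}{\zeta-w}=\sum_{j\ge0}(w/\zeta)^j$. This series converges uniformly on the circle, and only the term $j=0$ survives integration. If $|w|>r$, expand $\frac{\zeta}{\zeta-w}=-\frac{\zeta}{w}\sum_{j\ge0}(\zeta/w)^j$. This series contains only positive powers of $\zeta$, so every term integrates to $0$. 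This is the same term-wise integration trick the paper already used for $\int_{-\pi}^{\pi}\frac{d\theta}{z-e^{i\theta}}=0$.

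Summing the identity over $k$ gives $\int_0^{2\pi}\zeta f(\zeta)\,dt=2\pi N(r)$. Since $|\zeta|=r$,
$$
\int_0^{2\pi}|f(re^{it})|\,dt\ \ge\ \frac1r\Bigl|\int_0^{2\pi}\zeta f(\zeta)\,dt\Bigr| = \frac{2\pi N(r)}{r}.
$$
Multiplying by $r$ and integrating in $r$ yields
$$
\int_{\D}|f|\,dm\ \ge\ 2\pi\int_0^1 N(r)\,dr = 2\pi\sum_{k=1}^n\int_0^1\har_{\{r>|z_k|\}}\,dr = 2\pi\sum_{k=1}^n (1-|z_k|).
$$
Since $1-|z_k|=\mathrm{dist}(z_k,\T)$, this is exactly the claimed inequality.

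I expect the only real point to be the choice of the test function $\zeta$ on each circle. Integrating $f$ itself, or taking the area integral against a constant, only produces quantities such as $\sum \bar z_k$, which can cancel. Multiplying by $\zeta$ instead turns each term into a counting function with no cancellation, because every pole strictly inside the circle contributes $+1$. After that choice, the remaining steps are the justification of the term-wise integration and the exclusion of the finitely many radii $r=|z_k|$.
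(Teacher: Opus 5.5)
Your proof is correct, but it takes a different route from the paper's. The paper uses potential theory. It sets $u(z)=\sum_k\ln|z-z_k|$, so that $|\nabla u|=|f|$ and $\Delta u=2\pi\sum_k\delta_{z_k}$ in the sense of distributions. It pairs $\Delta u$ with a mollification $\psi_{\sigma,\epsilon}$ of the $1$-Lipschitz function $\max(1-\sigma-|z|,0)$, and integrates by parts to get $2\pi\sum_k\psi_{\sigma,\epsilon}(z_k)\le\int_{\D}|\nabla u|\,dm$. It then lets $\epsilon\to0$ and $\sigma\to0$. Your argument is really the same duality with the limiting test function $1-|z|$, made explicit. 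Since $\nabla u$ corresponds to $\overline{f}$ and $-\nabla(1-|z|)=z/|z|$, the paper's integration by parts written in polar coordinates is precisely your identity $\int_0^{2\pi}\zeta f(\zeta)\,dt=2\pi N(r)$ integrated over $r\in(0,1)$; that identity is Gauss's law for the flux through the circle $|z|=r$. What your version buys: it avoids distributions, mollifiers and the double limit, because on each admissible circle everything is smooth and the identity is checked by Laurent expansion. It also yields the stronger circle-wise bound $r\int_0^{2\pi}|f(re^{it})|\,dt\ge2\pi N(r)$, which immediately gives radially weighted versions such as $\int_{\D}|f(z)|\,w(|z|)\,dm(z)\ge2\pi\sum_k\int_{|z_k|}^1w(r)\,dr$. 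What the paper's version buys: it allows an arbitrary $1$-Lipschitz test function and adapts directly to the Newtonian potential in $\R^d$. Your circle argument extends to $\R^d$ as well, but you would have to replace the Laurent expansion by the divergence theorem on spheres.
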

    \begin{proof}
        Fix $\sigma>0$ and denote $\phi_\sigma(z)=\max((1-\sigma-|z|),0)$, $z\in\overline{\mathbb D}$. Notice that the function $\phi_\sigma$ is $1$-Lipschitz and compactly supported in $\mathbb D$. 
        
        Let $\eta\in C^\infty(\mathbb C)$ be a standard mollifier such that $\mathrm{supp}(\eta)\subset\mathbb D$ and $\int_{\D} \eta=1$. For a fixed $\sigma>\epsilon>0$, put $\eta_\epsilon(z)=\epsilon^{-2}\eta(z/\epsilon)$, $z\in \mathbb C$.   

        Define  the following auxiliary function $\psi_{\sigma,\epsilon}=\phi_{\sigma}\ast\eta_\epsilon$. Notice that for the chosen $\sigma$ and $\epsilon$ we have $\mathrm{supp}(\psi_{\sigma,\epsilon})\subset \D$ and  $\psi_{\sigma,\epsilon}\in C^\infty(\mathbb C)$, so this is actually a test function. On the other hand, it holds  that $\|\nabla \psi_{\sigma,\epsilon} \|_{\infty}\leq 1$ since the mollification does not increase the Lipschitz constant.
        
 Observe that $$|\nabla u(z)|=\biggl|\sum_{k=1}^n \frac{1}{z-z_k}\biggr|,$$ where $u(z):=\sum_{k=1}^n \ln|z-z_k|$.
        It is also well known that $\Delta u=2\pi\sum_{k=1}^n \delta_{z_k}$ in the distributional sense. An integration by parts gives
        $$|\langle \Delta u, \psi_{\sigma,\epsilon}\rangle|\leq\int_{\mathbb{D}}|\nabla\psi_{\sigma,\epsilon}||\nabla u|\leq \int_{\mathbb{D}}|\nabla u|.$$
%
       %
        %
        But 
       $$ \langle \Delta u, \psi_{\sigma,\epsilon}\rangle =2\pi \sum_{k=1}^n \psi_{\sigma,\epsilon}(z_k).$$
        Now we first let $\epsilon$, and next $\sigma$ tend to zero to  see that the result follows.
    \end{proof}
	
	Second, as mentioned above, we  don't know whether our bound in Theorem~\ref{thm:lowerbound} is optimal in the case where the dimension of the ambient space is three and higher, even if the weights are all equal to $1$. 
	
Third, it is also interesting to find out if counterparts of	Theorems~\ref{thm:lowerbound} and~\ref{thm:upperbound_d=2}  hold for other energies such as $s$-Riesz energies.

Finally, we would like to know what is the correct formulation of Chui's weighted conjecture. We do not know the answer even in the two dimensional case.


\end{document}